\documentclass[10pt,a4paper]{article}

\usepackage[comma]{natbib}
\usepackage{color}

\usepackage{amsmath,amssymb,amsthm,amsopn}
\usepackage{multirow}
\usepackage{float}
\usepackage{subfig}
\usepackage{graphicx}

\allowdisplaybreaks

\newtheorem{lemma}{Lemma}
\newtheorem{theorem}{Theorem}
\theoremstyle{remark}
\newtheorem{remark}{Remark}

\DeclareMathOperator{\Var}{Var}

\usepackage{verbatim,graphicx}

\RequirePackage[latin1]{inputenc} \RequirePackage[english]{babel}
\RequirePackage{amssymb} \RequirePackage{amsfonts}
\RequirePackage{amsmath} \RequirePackage{amsthm}
\RequirePackage{latexsym}

\newcommand{\abs}[1]{\left\vert#1\right\vert}

\theoremstyle{plain}

\newtheorem{teor*}{Teorema}

\theoremstyle{definition}

\pagestyle{myheadings}

\setlength{\evensidemargin}{0.2in}%
\setlength{\textwidth}{6in}
\setlength{\topmargin}{-1cm}\setlength{\textheight}{8.8in}
\setlength{\oddsidemargin}{\evensidemargin} \hfuzz2pt

\hfuzz2pt

\makeatletter
\def\cleardoublepage{\clearpage\if@twoside \ifodd\c@page\else
    \hbox{}
    \vspace*{\fill}
    \vspace{\fill}
    \thispagestyle{empty}
    \newpage
    \if@twocolumn\hbox{}\newpage\fi\fi\fi}
\makeatother



\title{Fourier methods for smooth distribution function estimation}

\author{J.E. Chac\'on\footnote{Departamento de Matem\'aticas, Universidad de Extremadura, E-06006 Badajoz, Spain. Email: {\tt jechacon@unex.es}}, \  P. Monfort\footnote{Departamento de Matem\'aticas, Universidad de Extremadura, E-06006 Badajoz, Spain. Email: {\tt pabmonf@unex.es}} \  and  C. Tenreiro\footnote{CMUC, Department
of Mathematics, University of Coimbra, Apartado 3008, 3001-454 Coimbra, Portugal. E-mail: {\tt
tenreiro@mat.uc.pt}}}

\begin{document}

\maketitle

\begin{abstract}
\noindent The limit behavior of the optimal bandwidth sequence for the kernel distribution function
estimator is analyzed, in its greatest generality, by using Fourier transform methods. We show a
class of distributions for which the kernel estimator achieves a first-order improvement in
efficiency over the empirical estimator.
\end{abstract}

\medskip
\noindent {\it Keywords:} Fourier analysis, kernel distribution estimation, mean integrated squared error, optimal bandwidth, sinc kernel.

\newpage

\section{Introduction}\label{sec:intro}

The kernel estimator of a distribution function was introduced independently by \citet{Tia63},
\citet*{Nad64} and \citet{WL64} as a smooth alternative to the empirical estimator. It is defined
as the distribution function corresponding to the well-known kernel density estimator. Precisely,
given independent real random variables $X_1, \ldots , X_n$  with common and unknown distribution
function $F$, assumed to be absolutely continuous with density function $f$, the kernel estimator
of $F(x)$ is $$F_{nh}(x)=n^{-1}\sum_{j=1}^nK\big(h^{-1}(x-X_j)\big),$$ where $h>0$ is the bandwidth
and the function $K$ will be referred to as the integrated kernel, since it is assumed that
$K(x)=\int_{-\infty}^xk(y)dy$ for some integrable function $k$, called kernel, having unit integral
over the whole real line.

Classical references on kernel distribution function estimators include \cite{Yam73}, which
provided mild necessary and sufficient conditions for its consistency in uniform norm,
\cite{Azz81}, \cite{Swa88} and \cite{Jon90} on asymptotic squared error analysis of the estimator,
or \cite{Sar93}, \cite{Alt95} and \cite{BHP98}, and more recently \cite{PB00} and \cite{Ten06}, on
data-driven bandwidth selection. There are also other recent papers on different aspects of kernel
distribution function estimation, like \cite{Ten03}, \cite{SvG05}, \cite{JSV07}, \cite{GN09},
\cite{BP09}, \cite{CRC10}, \cite{MS12} or \cite{Ten13}. See \cite{Ser09} for a detailed survey on
distribution function estimation, not limited to kernel-type methods.

This paper is devoted to the study of the kernel distribution function estimator from the point of
view of the mean integrated squared error,  $${\rm MISE}(h)\equiv{\rm MISE}_n(h)=\mathbb
E\int_{-\infty}^\infty\{F_{nh}(x)-F(x)\}^2dx.$$ In this sense, the optimal bandwidth  $h_{0n}$ is
the value of $h>0$ minimizing ${\rm MISE}(h)$. The existence of such a bandwidth was proved in
Theorem 1 of \cite{Ten06} under very general assumptions, and Proposition 2 in the same paper
showed that $h_{0n}\to0$ whenever the Fourier transform of $k$ is not identically equal to 1 on any
neighbourhood of the origin. This condition can be considered mild as well, since it is satisfied
for any finite-order kernel; however, it does not hold for a superkernel \citep[see][]{CMN07}.

The purpose of this note is to show how to use Fourier transform techniques for the analysis of kernel distribution estimators. Particularly, expressing the MISE in terms of characteristic functions allows us to obtain a result on the limit behavior of the optimal bandwidth
sequence in its most general form so that it also covers the case of a superkernel, and to explore
its consequences showing the peculiar properties of the use of superkernels and the sinc kernel in
kernel distribution function estimation. Precisely, it is shown in Section
\ref{sec:mainresults} that in some situations the sequence $h_{0n}$ does not necessarily tend to
zero. Moreover, we exhibit a class of distributions for which the kernel distribution estimator
presents a first-order improvement over its empirical counterpart, opposite to the usual situation, where only second-order improvements are possible (see Remark \ref{rem3}). Our findings are illustrated in Section \ref{sec:examples} through two representative examples.

\section{Main results}\label{sec:mainresults}

Recall from \cite{CRC10} that the kernel distribution function estimator admits the representation
\begin{equation}\label{repr}
F_{nh}(x)=\int F_n(x-hz)dK(z),
\end{equation}
where $F_n$ denotes the empirical distribution function (here and below integrals without integration limits are meant over the whole real line). Using this, and standard properties of the empirical process, it is possible to obtain a decomposition of ${\rm MISE}(h)={\rm IV}(h)+{\rm ISB} (h)$, where the integrated variance ${\rm IV}(h)=\int\Var\{F_{nh}(x)\}dx$ and the integrated squared bias ${\rm ISB}(h)=\int\{\mathbb E[F_{nh}(x)]-F(x)\}^2dx$ can be expressed in the following exact form:
\begin{align}
{\rm IV}(h)&=n^{-1}\iiint\big\{F\big(x-h(y\vee z)\big)-F(x-hy)F(x-hz)\big\}dK(y)dK(z)dx,\label{IV}\\
{\rm ISB}(h)&=\iiint\{F(x-hy)-F(x)\}\{F(x-hz)-F(x)\}dK(y)dK(z)dx,\label{ISB}
\end{align}
with $y\vee z$ standing for $\max\{y,z\}$.

Note that the representation (\ref{repr}) and the exact expressions (\ref{IV}) and (\ref{ISB}) also
make sense for $h=0$, implying that the kernel distribution estimator reduces to the empirical
distribution function for $h=0$, for which the well-known MISE formula reads ${\rm MISE}(0)={\rm
IV}(0)=n^{-1}\int F(1-F)$ whenever $\psi(F)=\int F(1-F)$ is finite. Moreover, it is not hard to
check that $\int|x|dF(x)<\infty$ and $\int|y\,k(y)|dy<\infty$ ensure that MISE$(h)$ is finite for
all $h>0$, so those two minimal conditions will be assumed henceforth.  Note that the required
condition that $F$ have a finite mean is slightly stronger than $\psi(F)<\infty$ since
$\psi(F)\leq2\int|x|dF(x)$.

\subsection{Limit behavior of the optimal bandwidth sequence}

Denote by $\varphi_g$ the Fourier transform of a function $g$, defined as $\varphi_g(t)=\int e^{itx}g(x)dx$. As in \cite{CMNP07}, the key to understand the limit behavior of the optimal bandwidth sequence is
to use Fourier transforms to express the MISE criterion. \cite{Abd93} provided a careful account of the
necessary conditions under which the MISE can be expressed in terms of Fourier transforms. The proof of his Proposition 2 implicitly derives formulas for ${\rm ISB}(h)$ and ${\rm IV}(h)$ in terms of $\varphi_k$ and $\varphi_f$ for $h>0$. We reproduce this result here for completeness, and show that it can be extended to cover the case $h=0$ as well. 


\begin{theorem}\label{thm:Fourier}
If $\int|x|dF(x)<\infty$ and $\int|y\ k(y)|dy<\infty$ then, for all $h\geq0$, the IV and ISB functions can be written
as
\begin{align*}
{\rm IV}(h)&=(2\pi)^{-1}n^{-1}\int t^{-2}|\varphi_k(th)|^2\{1-|\varphi_f(t)|^2\}dt,\\
{\rm ISB}(h)&=(2\pi)^{-1}\int t^{-2}|1-\varphi_k(th)|^2|\varphi_f(t)|^2dt.
\end{align*}
\end{theorem}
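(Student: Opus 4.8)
The plan is to apply Parseval's identity twice -- once to the bias function and once to the centred estimator -- after arranging for both to be square integrable. Write $b_h(x)=\mathbb{E}[F_{nh}(x)]-F(x)$ and $\alpha_{nh}(x)=F_{nh}(x)-\mathbb{E}[F_{nh}(x)]$, so that ${\rm ISB}(h)=\int b_h^2$ and, by Tonelli, ${\rm IV}(h)=\int\Var\{F_{nh}(x)\}\,dx=\mathbb{E}\int\alpha_{nh}(x)^2\,dx$. The one genuine difficulty is that $F$ and $K$ are themselves neither integrable nor square integrable, so they cannot be Fourier-transformed directly; the remedy, and the precise place where the hypotheses $\int|x|\,dF(x)<\infty$ and $\int|y\,k(y)|\,dy<\infty$ enter, is to express $b_h$ and $\alpha_{nh}$ through centred differences of the form $F(\cdot-a)-F(\cdot)$ and $K\big(h^{-1}(\cdot-a)\big)-K(h^{-1}\cdot)$, each of which belongs to $L^1\cap L^\infty\subset L^2$.

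The first step is an elementary computation: for every $a\in\mathbb{R}$ one has $\|F(\cdot-a)-F\|_1=|a|$, $\|F(\cdot-a)-F\|_\infty\le1$, and (writing the difference as $\mp$ the convolution of $f$ with the indicator of the interval between $0$ and $a$, or by integration by parts) $\varphi_{F(\cdot-a)-F}(t)=(it)^{-1}\big(1-e^{ita}\big)\varphi_f(t)$, where $\varphi_f$ is the characteristic function of $F$; the same identity holds with $K,k$ in place of $F,f$, and after the rescaling $x\mapsto hx$ the function $x\mapsto K\big(h^{-1}(x-a)\big)-K(h^{-1}x)$ lies in $L^1\cap L^\infty$, with $L^1$-norm at most $|a|\,\|k\|_1$, and has Fourier transform $(it)^{-1}\big(1-e^{ita}\big)\varphi_k(th)$. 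For the ISB, the representation (\ref{repr}) together with $\int dK=1$ gives $b_h(x)=\int\{F(x-hz)-F(x)\}\,dK(z)$; the bounds above yield $\|b_h\|_1\le h\int|z\,k(z)|\,dz<\infty$ and $\|b_h\|_\infty\le\|k\|_1$, so $b_h\in L^2$, and they also license Fubini to move the Fourier transform inside the $z$-integral, giving $\varphi_{b_h}(t)=(it)^{-1}\varphi_f(t)\int\big(1-e^{ihzt}\big)\,dK(z)=(it)^{-1}\varphi_f(t)\big(1-\varphi_k(th)\big)$. Parseval's identity ${\rm ISB}(h)=(2\pi)^{-1}\int|\varphi_{b_h}(t)|^2\,dt$ is then precisely the asserted expression.

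For the IV with $h>0$ I would write $\alpha_{nh}(x)=n^{-1}\sum_{j=1}^n\big\{K\big(h^{-1}(x-X_j)\big)-K(h^{-1}x)\big\}-\mathbb{E}\big\{K\big(h^{-1}(x-X_1)\big)-K(h^{-1}x)\big\}$, the inserted $K(h^{-1}x)$ cancelling identically. Each of the $n$ summands lies in $L^1\cap L^\infty$ with $L^1$-norm at most $|X_j|\,\|k\|_1$, and the expectation term lies in $L^1\cap L^\infty$ because $\mathbb{E}|X_1|<\infty$; hence $\alpha_{nh}\in L^2$ almost surely, and the computation of the previous paragraph gives $\varphi_{\alpha_{nh}}(t)=(it)^{-1}\varphi_k(th)\{\varphi_f(t)-\varphi_n(t)\}$, where $\varphi_n(t)=n^{-1}\sum_j e^{itX_j}$ is the empirical characteristic function. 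Parseval and Tonelli then give ${\rm IV}(h)=(2\pi)^{-1}\int t^{-2}|\varphi_k(th)|^2\,\mathbb{E}|\varphi_f(t)-\varphi_n(t)|^2\,dt$, and expanding the double sum -- the off-diagonal terms vanishing by independence -- yields $\mathbb{E}|\varphi_n(t)-\varphi_f(t)|^2=n^{-1}\big(1-|\varphi_f(t)|^2\big)$, which is the stated formula.

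It remains to cover $h=0$. Then (\ref{repr}) gives $F_{n0}=F_n$, so $b_0\equiv0$ and ${\rm ISB}(0)=0$, matching the formula since $\varphi_k(0)=\int k=1$; and for ${\rm IV}(0)$ one repeats the previous paragraph with the Heaviside function $H(x)=\mathbf{1}\{x\ge0\}$ in the role of $K$. The decomposition becomes $\alpha_{n0}(x)=n^{-1}\sum_j\{H(x-X_j)-H(x)\}-\{F(x)-H(x)\}$; the function $F-H$ lies in $L^1\cap L^\infty$ exactly because $\mathbb{E}|X_1|<\infty$ (indeed $\int_{-\infty}^0F+\int_0^\infty(1-F)=\mathbb{E}|X_1|$), one checks $\varphi_{H(\cdot-a)-H}(t)=(it)^{-1}(1-e^{ita})$ and $\varphi_{F-H}(t)=(it)^{-1}(1-\varphi_f(t))$, and the argument goes through with $\varphi_k$ replaced by $\varphi_k(0)=1$; as a byproduct this reproves the classical identity $\int F(1-F)=(2\pi)^{-1}\int t^{-2}(1-|\varphi_f(t)|^2)\,dt$. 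The main obstacle throughout is bookkeeping rather than depth: one must never Fourier-transform $F$ or $K$ directly but always through the centred differences -- which is exactly where the two moment hypotheses are consumed -- and one must repeatedly justify interchanging $\mathbb{E}$, $\int dx$, $\int dK(z)$ and the Fourier integral, each interchange being licensed by the $L^1$ bounds above together with the finiteness of ${\rm MISE}(h)$ recorded in the excerpt.
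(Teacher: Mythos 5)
Your proof is correct, and it is more self-contained than the paper's. For $h>0$ the paper does not reprove the formulas at all: it observes that they are contained in the proof of Proposition 2 of Abdous (1993) and devotes its own argument (Lemma \ref{lem:intF1F}) exclusively to the case $h=0$, i.e.\ to the identity $\int K(1-K)=(2\pi)^{-1}\int t^{-2}\{1-|\varphi_k(t)|^2\}dt$ for an arbitrary integrated kernel. There the decomposition is deterministic rather than stochastic: one writes $K(x)\{1-K(x)\}=\int\{I_{[y,\infty)}(x)-K(x)\}^2k(y)\,dy$, notes that $G_y=I_{[y,\infty)}-K$ lies in $L^1\cap L^\infty$ precisely because $\int|yk(y)|dy<\infty$, computes $\varphi_{G_y}(t)=(-it)^{-1}\{e^{ity}-\varphi_k(t)\}$ by integration by parts, applies Parseval, and only then integrates out $y$ against $k$. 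Your route --- Parseval applied to the bias $b_h$ and to the centred process $\alpha_{nh}$, with every Fourier transform taken of centred differences $F(\cdot-a)-F$ and $K\big(h^{-1}(\cdot-a)\big)-K(h^{-1}\cdot)$, combined with the variance identity $\mathbb{E}|\varphi_n(t)-\varphi_f(t)|^2=n^{-1}\{1-|\varphi_f(t)|^2\}$ --- exploits exactly the same mechanism (the two moment hypotheses buy $L^1\cap L^\infty\subset L^2$ membership of the differences, whose transforms carry the factor $(it)^{-1}$ that produces the $t^{-2}$ weight), but it delivers a complete proof of both formulas for all $h\ge0$ in one sweep, with $h=0$ recovered as the degenerate instance $\varphi_k(0)=1$ via the Heaviside function. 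All of your individual computations (the $L^1$ bounds, the transforms of the differences, the Fubini/Tonelli interchanges, the off-diagonal cancellation in the variance of the empirical characteristic function) check out. What your write-up buys is independence from the external reference; what the paper's buys is brevity plus a reusable statement of the $h=0$ identity for general integrated kernels, which it needs later to evaluate $\psi(K)$.
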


Particularly, note that for $h=0$ the previous result yields a Parseval-like formula for distribution functions,
\begin{equation}\label{eq:F1F}
\psi(F)=\int F(1-F)=(2\pi)^{-1}\int t^{-2}\{1-|\varphi_f(t)|^2\}dt,
\end{equation}
which can be useful to compute errors in an exact way in cases where $F$ does not have a close
expression but $\varphi_f$ does, as it happens for instance for the normal distribution (see also
Section \ref{sec:examples} below). Moreover, we show in Lemma \ref{lem:intF1F} that (\ref{eq:F1F})
remains valid for integrated kernels $K$. In the following it will be assumed that $\psi(K)>0$, a
property that immediately holds, using (\ref{eq:F1F}), whenever $\varphi_k(t)\in[0,1]$ for all $t$.
Note that, for density estimation, admissible kernels are precisely those whose Fourier transform
satisfies that restriction \citep[see][]{Cli88}.

The limit behavior of the optimal bandwidth sequence $h_{0n}$ is determined in its greatest
generality by the following constants, depending on the Fourier transforms of $f$ and $k$: let
$C_f$ denote the smallest positive frequency from which $\varphi_f$ is null along a proper interval
and $D_f$ the positive frequency from which $\varphi_f$ is identically null (so that $C_f\leq D_f$,
both possibly being infinite); also, denote $S_k$ the greatest frequency such that $\varphi_k$ is
identically equal to one on $[0,S_k]$ and $T_k$ the smallest frequency such that $\varphi_k$ is not
identically equal to one on a subinterval of $[T_k,\infty)$, and note that $S_k\leq T_k$ with both
possibly being zero. In mathematical terms,
$$\begin{array}{ll}
C_f=\sup\{r\ge 0\colon\varphi_f(t)\neq0\text{ a.e. for }t\in[0,r]\},&\quad
D_f=\sup\{t\ge0\colon\varphi_f(t)\neq0\}\\
S_k=\inf\{t\ge 0\colon\varphi_k(t)\neq1\},
&\quad T_k=\inf\{r\ge0\colon\varphi_k(t)\neq1\text{ a.e. for }t\ge r\}.
\end{array}$$
Finally, define $h_*=\sup\{h\geq0\colon{\rm ISB}(h)=0\}$. The following result shows the limit of the optimal bandwidth sequence $h_{0n}$ in the common
case where $C_f=D_f$ and $S_k=T_k$.

\begin{theorem}\label{thm:limit}
Assume that $\int|x|dF(x)<\infty$, $\int|y\, k(y)|dy<\infty$ and $\psi(K)>0$, and suppose that
$C_f=D_f$ and $S_k=T_k$. Then, $h_{0n}\to S_k/D_f$ as $n\to\infty$ and also $h_*=S_k/D_f$.
\end{theorem}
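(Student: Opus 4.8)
The plan is to analyze the MISE via the Fourier representation in Theorem \ref{thm:Fourier}, splitting the integral at the relevant cutoff frequency. Write $\mathrm{MISE}(h) = \mathrm{IV}(h) + \mathrm{ISB}(h)$ using the formulas from Theorem \ref{thm:Fourier}. The hypotheses $C_f = D_f$ and $S_k = T_k$ collapse each pair of constants to a single value, so $\varphi_f$ is (a.e.) nonzero on $[0,D_f)$ and vanishes on $(D_f,\infty)$, while $\varphi_k \equiv 1$ on $[0,S_k]$ and is (a.e.) not equal to $1$ on any interval to the right of $S_k$. First I would settle the claim $h_* = S_k/D_f$: for the integrand of $\mathrm{ISB}(h)$ to vanish a.e.\ we need $\varphi_k(th) = 1$ for a.e.\ $t$ with $\varphi_f(t) \neq 0$, i.e.\ for a.e.\ $t \in [0, D_f)$; since $\varphi_k$ fails to equal $1$ on every interval past $S_k$, this forces $th \leq S_k$ for $t$ up to $D_f$, i.e.\ $h \leq S_k/D_f$, and conversely $h \leq S_k/D_f$ makes $\varphi_k(th)=1$ on $[0,D_f)$ so $\mathrm{ISB}(h)=0$. (The degenerate cases $D_f = \infty$ or $S_k = 0$ give $h_* = 0$, and $S_k = \infty$ with $D_f$ finite gives $h_* = \infty$, consistently.)

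For the convergence $h_{0n} \to S_k/D_f$, set $h_* = S_k/D_f$ and treat the generic case $0 < h_* < \infty$. The upper bound direction: for any $h > h_*$ one has $\mathrm{ISB}(h) > 0$ (a fixed positive constant not depending on $n$, since $\varphi_k$ differs from $1$ on a positive-measure subset of $\{t : |t| < D_f,\ th \notin [0,S_k]\}$), whereas $\mathrm{MISE}(h_*) = \mathrm{IV}(h_*) = O(n^{-1}) \to 0$; hence eventually $\mathrm{MISE}(h) > \mathrm{MISE}(h_*) \geq \mathrm{MISE}(h_{0n})$, so $\limsup_n h_{0n} \leq h_*$. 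For the lower bound: I would show that for $h$ in a neighbourhood of $0$ (or of any fixed $h < h_*$) the integrated variance $\mathrm{IV}(h)$ is strictly decreasing, or at least that $n\,\mathrm{IV}(h) = \psi(K)^{-1}\cdot(\text{something})$ — more precisely, using $\psi(K) > 0$ and the Parseval formula \eqref{eq:F1F} applied to $K$, that $n\big(\mathrm{MISE}(0) - \mathrm{IV}(h)\big)$ is bounded below by a positive quantity for $h$ bounded away from $0$, coming from the factor $1 - |\varphi_k(th)|^2 \leq 1$ with strict gain once $\varphi_k(th)$ dips below $1$ on a set of positive measure (which happens precisely when $h > 0$, again because $S_k = 0$ would force $h_* = 0$, excluded here). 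Since for $h < h_*$ the bias term still vanishes, $\mathrm{MISE}(h) = \mathrm{IV}(h) < \mathrm{IV}(0) = \mathrm{MISE}(0)$, and moreover $\mathrm{IV}$ is decreasing on $[0,h_*]$, so the minimizer cannot be bounded away from $h_*$ from below; this gives $\liminf_n h_{0n} \geq h_*$.

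The main obstacle will be the lower-bound argument, specifically controlling $\mathrm{IV}(h)$ uniformly for $h$ in compact subsets of $(0,h_*)$ and showing it is genuinely smaller than $\mathrm{IV}(h')$ for $h' < h$: one needs that the weight $\int t^{-2}\big(|\varphi_k(th)|^2 - |\varphi_k(th')|^2\big)\{1 - |\varphi_f(t)|^2\}\,dt$ has a definite sign, which is not automatic from $\varphi_k \in [0,1]$ alone but does follow once one notes that on $[0, D_f)$ the factor $\{1 - |\varphi_f(t)|^2\}$ is nonnegative and strictly positive on a set of positive measure, and that $|\varphi_k(th)|^2 \le |\varphi_k(th')|^2$ fails to be an a.e.\ equality exactly when $h > h'$. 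Handling the boundary/degenerate cases ($h_* = 0$, $h_* = \infty$) separately, and justifying interchange of limits via dominated convergence with the integrable envelope $t^{-2}\min(1, C t^2 h^2)$-type bounds near the origin (using $\int |y\,k(y)|\,dy < \infty$ to control $|1 - \varphi_k(th)| \le Cth$), completes the argument; these are the routine parts I would not expand here.
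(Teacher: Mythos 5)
Your overall strategy is the same as the paper's: the paper proves Theorem \ref{thm:limit} via the sandwich $S_k/D_f \le \inf_n h_{0n} \le \limsup_n h_{0n} \le h_* \le \min\{S_k/C_f, T_k/D_f\}$ (Lemma \ref{lemaSK}), and your identification of $h_*$ and your two-sided plan for $h_{0n}$ match it. However, there are two genuine gaps. The first is in your lower-bound step. You propose to show that ${\rm IV}$ is decreasing on $[0,h_*]$ by arguing that $|\varphi_k(th)|^2 \le |\varphi_k(th')|^2$ for $h>h'$ "fails to be an a.e.\ equality exactly when $h>h'$". That pointwise inequality is false in general: $|\varphi_k|$ need not be monotone outside $[0,S_k]$ (take, e.g., a kernel whose Fourier transform oscillates with a decaying envelope), so your route to the definite sign of the weight integral breaks down. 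The correct observation --- and the one the paper uses --- is that for $h \le S_k/D_f$ the factor $1-|\varphi_k(th)|^2$ is supported on $t \ge S_k/h \ge D_f$, where $1-|\varphi_f(t)|^2$ is identically $1$; the substitution $s=th$ then gives the \emph{exact} identity ${\rm MISE}(h) = n^{-1}\{\psi(F)-\psi(K)h\}$ via Lemma \ref{lem:intF1F}, i.e.\ linearity in $h$ with strictly negative slope $-n^{-1}\psi(K)$, which is what excludes minimizers in $[0,S_k/D_f)$ for every $n$ (and, incidentally, is the source of the first-order efficiency gain in Remark \ref{rem3}). You gesture at "the Parseval formula applied to $K$", so the ingredient is in your hands, but the argument as written relies on a false monotonicity claim rather than on this computation.

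The second gap is in the upper bound. From "for each fixed $h>h_*$, eventually ${\rm MISE}_n(h)>{\rm MISE}_n(h_{0n})$" you conclude $\limsup_n h_{0n}\le h_*$, but this is a non sequitur: $h_{0n}$ is a moving target, and ruling out each fixed $h>h_*$ as an eventual minimizer does not rule out a subsequence $h_{0n_k}$ converging to some $h_L>h_*$ without ever stabilizing. The paper closes this by invoking the continuity of ${\rm ISB}$ (Proposition 1 of \cite{Ten06}): along such a subsequence ${\rm ISB}(h_{0n_k})\to{\rm ISB}(h_L)>0$, while ${\rm MISE}_{n_k}(h)\to{\rm ISB}(h)$ for each fixed $h$, and minimality of $h_{0n_k}$ then forces ${\rm ISB}(h)\ge{\rm ISB}(h_L)>0$ for all $h$, contradicting ${\rm ISB}(h)\to0$ as $h\to0$. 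Some such uniformity or continuity ingredient is indispensable and is missing from your sketch; with it, and with the exact linear formula above, your outline becomes the paper's proof.
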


A number of consequences can be extracted from Theorem \ref{thm:limit}:

\begin{remark}
A kernel $k$ with $S_k>0$ is called a superkernel \citep[see][]{CMN07}. If an integrated
superkernel is used in the kernel distribution function estimator and the density $f$ is such that
$D_f<\infty$ (see \citealp{CMNP07}, and Section \ref{sec:examples} below for examples of such
distributions) then, contrary to the usual situation, the optimal bandwidth sequence $h_{0n}$ does
not tend to zero, but to the strictly positive constant $S_k/D_f$. Moreover, any positive constant can be the limit of an optimal bandwidth sequence, because modifying the scale of the density by taking $f_a(x)=f(x/a)/a$, for any $a>0$, it follows that $D_{f_a}=D_f/a$, and hence the limit of the optimal bandwidth sequence equals $aS_k/D_f$.
\end{remark}

\begin{remark}
Since $h_*=S_k/D_f$, the kernel estimator $F_{nh}$ is unbiased for any fixed (i.e., not depending
on $n$)
    choice of $h\in[0,S_k/D_f]$. If either $K$ is not an integrated superkernel or the characteristic function $\varphi_f$ does not have bounded support, then the only kernel distribution estimator with null ISB corresponds to $h=0$, the empirical distribution function.
\end{remark}
\begin{remark}\label{rem3}
It is shown in the proof of Theorem \ref{thm:limit} that for $h\in[0,S_k/D_f]$ the MISE
    of $F_{nh}$ admits the exact expression ${\rm MISE}(h)=n^{-1}\psi(F)-n^{-1}\psi(K)h$. From
    this, it follows that for any fixed $h\in(0,S_k/D_f]$ the kernel estimator $F_{nh}$
    presents an asymptotic first-order reduction in MISE over the empirical estimator; that is,
    its MISE is of order $n^{-1}$ as for the empirical estimator, yet with a strictly smaller
    constant (namely, $\psi(F)-\psi(K)h<\psi(F)$). As a result, over the class of distributions
    with $D_f$ bounded by a constant (say, $D_f\leq M$) the kernel estimator with bandwidth $h=S_k/M$ is
    strictly more efficient than the empirical distribution function $F_n$. This is in contrast with the more common
    case (i.e., $S_k=0$ or $D_f=\infty$) where it is well-known that the asymptotic improvement
    of $F_{nh}$ over $F_n$ is only of second order, in the sense that ${\rm MISE}(h_{0n})$
    admits the asymptotic representation $n^{-1}\psi(F)-cn^{-p}+o(n^{-p})$ for some $p>1$ and
    $c>0$ (see, e.g., \citealp{Jon90}, and \citealp{SX97}).
\end{remark}

\subsection{Sinc kernel distribution function estimator}

In this section we consider the sinc kernel, defined by ${\rm sinc}(x)=\sin(x)/(\pi x)$ for $x\neq0$ and ${\rm sinc}(0)=1/\pi$. This function is not integrable, so the sinc kernel density estimator inherits this undesirable property, but such a defect can be corrected as described in \cite{GHU03}. Nevertheless, the sinc kernel is square integrable, and as such the sinc kernel density estimator achieves certain optimality properties with respect to the MISE \citep{Dav77}, that make the sinc kernel useful for density estimation (see \citealp{GHU07}, or \citealp{Tsy09}, Section 1.3).

\citet[][Section 3]{Abd93} provided a careful study showing that it also makes sense to use the MISE criterion for kernel distribution function estimators based on the integrated sinc kernel. However, it is not so clear from his developments how the sinc kernel distribution function estimator is explicitly defined, nor the asymptotic properties of the optimal bandwidth sequence in this case, since Theorem \ref{thm:limit} above can not be directly applied given that the sinc kernel is not integrable. This section contains a detailed treatment of these issues.

First, note that the definition of the integrated kernel $K(x)=\int_{-\infty}^x{\rm sinc}(z)dz$ has to be understood in the sense of Cauchy principal value, i.e. $K(x)=\lim_{M\to\infty}\int_{-M}^x{\rm sinc}(z)dz$, because the integral is not Lebesgue-convergent. A simpler way to express such principal value is $K(x)=\frac12+{\rm Si}(x)$, where ${\rm Si}(x)=\int_0^x{\rm sinc}(z)dz$ is the sine integral function (with the usual convention that $\int_a^b=-\int_b^a$ if $b<a$). This yields the following explicit form for the sinc kernel distribution function estimator:
\begin{equation}\label{sinckde}
F_{nh}^{\rm sinc}(x)=\tfrac12+n^{-1}\sum_{j=1}^n{\rm Si}\big(h^{-1}(x-X_j)\big).
\end{equation}

An alternative, and perhaps more natural, derivation of (\ref{sinckde}) is found through the use of inversion formulas. The sinc kernel density estimator with bandwidth $h=1/T$ is readily obtained from the inversion formula $f(x)=(2\pi)^{-1}\int e^{-itx}\varphi_f(t)dt$ by replacing $\varphi_f$ with the empirical characteristic function $\varphi_n(t)=n^{-1}\sum_{j=1}^ne^{itX_j}$, conveniently truncated to get a finite integral $(2\pi)^{-1}\int_{-T}^Te^{-itx}\varphi_n(t)dt$ \citep[see for instance][p. 774]{Ch92}. An inversion formula relating $F$ and $\varphi_f$ is the so-called Gil-Pelaez formula $F(x)=\frac12-\frac1\pi\int_0^\infty t^{-1}\Im\{e^{-itx}\varphi_f(t)\}dt$, with $\Im\{z\}$ standing for the imaginary part of a complex number $z$, which is valid for a continuous $F$ in the principal value sense \citep{Gur48}. Reasoning as before, replacing $\varphi_f$ with $\varphi_n$ and restricting the domain of integration to $[0,1/h]$, results in the same sinc kernel distribution function estimator shown in (\ref{sinckde}).

As a square integrable function, the Fourier transform of the sinc kernel is the indicator function
of the interval $[-1,1]$. In this sense, \cite{Abd93} showed that the IV and ISB formulas of
Theorem \ref{thm:Fourier} above remain valid for the sinc kernel distribution estimator, as long as
the square integrability of $f$ is added to its assumptions, leading to the following simple exact
MISE formula for $h>0$:
\begin{equation}\label{eq:MISEsinc}
{\rm MISE}(h)=(n\pi)^{-1}\int_{0}^{1/h} t^{-2} \{1-|\varphi_f(t)|^2\}dt + \pi^{-1}\int_{1/h}^{\infty} t^{-2} |\varphi_f(t)|^2dt.
\end{equation}
Straightforward differentiation shows that the critical points of such a MISE function are located at any value $h_\diamond$ such that $|\varphi_f(1/h_\diamond)|^2=(n+1)^{-1}$. This does not reveal, however, if such critical points are local minima or maxima. The following result shows the existence of a global minimizer $h_{0n}$ of (\ref{eq:MISEsinc}), and that Theorem \ref{thm:limit} above remains valid for the sinc kernel estimator. Note, again, that Theorem 2 of \cite{Ten06} on the existence of $h_{0n}$ can not be directly applied here because it relies on the assumption that the kernel function is integrable.

\begin{theorem}\label{thm:hsinc}
Assume that $f$ is square integrable and $\int|x|dF(x)<\infty$. Then, there exists a bandwidth
$h_{0n}$ that minimizes the MISE of the sinc kernel distribution function estimator. Moreover, if
$C_f=D_f$ then $h_{0n}\to 1/D_f$ as $n\to\infty$ and also $h_*=1/D_f$.
\end{theorem}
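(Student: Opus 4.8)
The plan is to reparametrize by the truncation frequency $T=1/h$ and work with $G(T):={\rm MISE}(1/T)$, $T>0$. Splitting $\int_0^{T}=\int_0^\infty-\int_T^\infty$ in the first integral of (\ref{eq:MISEsinc}) and using the Parseval identity (\ref{eq:F1F}), which (since $|\varphi_f|^2$ is even) reads $\int_0^\infty t^{-2}\{1-|\varphi_f(t)|^2\}\,dt=\pi\psi(F)<\infty$, one gets the clean identity
\[
G(T)=n^{-1}\psi(F)+(n\pi)^{-1}J_n(T),\qquad J_n(T):=(n+1)\int_T^\infty t^{-2}|\varphi_f(t)|^2\,dt-T^{-1}.
\]
Hence minimizing ${\rm MISE}$ over $h>0$ is equivalent to minimizing $J_n$ over $T>0$, and $J_n$ is continuously differentiable with $J_n'(T)=T^{-2}\{1-(n+1)|\varphi_f(T)|^2\}$, which recovers the critical-point equation stated before the theorem.

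For the existence of a minimizer I would argue by compactness. As $f$ is a density, $f\in L^1$ and Riemann--Lebesgue gives $\varphi_f(t)\to0$, so there is $T_0$ with $(n+1)|\varphi_f(t)|^2<\tfrac12$ on $[T_0,\infty)$; then $J_n(T_0)\le\tfrac12T_0^{-1}-T_0^{-1}<0$, so $\inf_{T>0}J_n<0$. Moreover, from $\int_T^\infty t^{-2}|\varphi_f(t)|^2\,dt=T^{-1}-\int_T^\infty t^{-2}\{1-|\varphi_f(t)|^2\}\,dt\ge T^{-1}-\pi\psi(F)$ we get $J_n(T)\ge nT^{-1}-(n+1)\pi\psi(F)\to+\infty$ as $T\to0^{+}$, while $f\in L^2$ gives $\varphi_f\in L^2$ (Plancherel), whence $\int_T^\infty t^{-2}|\varphi_f|^2\,dt\to0$ and $J_n(T)\to0$ as $T\to\infty$. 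Therefore the sublevel set $\{T>0\colon J_n(T)\le J_n(T_0)\}$ is a nonempty compact subset of $(0,\infty)$ on which the continuous function $J_n$ attains its minimum; this gives a minimizer $T_{0n}$, i.e.\ a bandwidth $h_{0n}=1/T_{0n}$ minimizing ${\rm MISE}$, and since $J_n(T_{0n})<0$ this $h_{0n}$ even beats $h=0$.

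For the limiting statement, assume $C_f=D_f$ and write $D:=D_f\in(0,\infty]$; I must show that every minimizer satisfies $T_{0n}\to D$ (so $h_{0n}\to1/D_f$). If $D<\infty$, the upper bound is immediate: for $T\ge D$ one has $\varphi_f=0$ a.e.\ on $[T,\infty)$, so $J_n(T)=-T^{-1}$ is strictly increasing there, hence no minimizer lies in $(D,\infty)$ and $T_{0n}\le D$ (this is vacuous when $D=\infty$). The substantive step is the lower bound: I claim $\inf_{0<T\le D'}J_n(T)\to+\infty$ for every $D'<D$. To prove it, fix $T_{\min}$ with $0<T_{\min}<\min\{D',(\pi\psi(F))^{-1}\}$ (possible as $\psi(F)>0$, $F$ being continuous) and split $(0,D']=(0,T_{\min}]\cup[T_{\min},D']$: on $(0,T_{\min}]$ the bound above gives $J_n(T)\ge nT_{\min}^{-1}-(n+1)\pi\psi(F)$, which tends to $+\infty$; on $[T_{\min},D']$ monotonicity of the tail integral gives $J_n(T)\ge(n+1)a(D')-T_{\min}^{-1}$ with $a(D'):=\int_{D'}^\infty t^{-2}|\varphi_f(t)|^2\,dt>0$, the positivity being exactly where $C_f=D_f$ enters (it forces $\varphi_f\ne0$ a.e.\ on some $(D',r)$ with $r<D$), so this also tends to $+\infty$. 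Since $J_n(T_{0n})=\min J_n<0$, the claim forces $T_{0n}>D'$ for all large $n$, and together with $T_{0n}\le D$ this yields $T_{0n}\to D$, i.e.\ $h_{0n}\to1/D_f$. Finally, by Theorem~\ref{thm:Fourier} with $\varphi_k=\mathbf{1}_{[-1,1]}$ one has ${\rm ISB}(h)=\pi^{-1}\int_{1/h}^\infty t^{-2}|\varphi_f(t)|^2\,dt$, which vanishes iff $\varphi_f=0$ a.e.\ on $[1/h,\infty)$; the same dichotomy (nonvanishing on an interval below $D$, vanishing a.e.\ above $D$) shows this happens exactly for $h\le1/D_f$, so $h_*=1/D_f$.

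The step I expect to be the main obstacle is the lower-bound claim $\inf_{0<T\le D'}J_n\to+\infty$. One cannot bound $|\varphi_f|$ away from zero on $(0,D']$ (it may have zeros there), and near $T=0$ the tail integral $\int_T^\infty t^{-2}|\varphi_f|^2$ itself blows up like $T^{-1}$, so a crude estimate is hopeless; the key is to cancel that blow-up against the \emph{finite} constant $\pi\psi(F)$ furnished by the Parseval identity (\ref{eq:F1F}), which tames the small-$T$ regime and leaves only the harmless compact range $[T_{\min},D']$ to control.
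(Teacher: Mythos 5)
Your proof is correct; every step checks out: the identity ${\rm MISE}(1/T)=n^{-1}\psi(F)+(n\pi)^{-1}J_n(T)$ is a legitimate rearrangement of (\ref{eq:MISEsinc}) since all the tail integrals involved are finite, the sign $J_n(T_0)<0$ follows from Riemann--Lebesgue, the coercivity at $T\to0^+$ and the vanishing at $T\to\infty$ give compactness of the sublevel set, and $a(D')>0$ holds for $D'<D_f$. On the existence part your route is essentially the paper's: it too establishes continuity, the boundary limits ${\rm MISE}(h)\to n^{-1}\psi(F)$ as $h\to0$ and ${\rm MISE}(h)\to\infty$ as $h\to\infty$, and then uses Riemann--Lebesgue (with dominated convergence) to show $\lim_{h\to0}h^{-1}\{{\rm MISE}(h)-n^{-1}\psi(F)\}=-(n\pi)^{-1}<0$, which produces the same ``beats the empirical estimator'' point that your explicit $T_0$ provides. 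Where you genuinely diverge is the limit behavior, which the paper dispatches by saying it ``can be adapted from the proof of Lemma~\ref{lemaSK}'': there the bound $\limsup_n h_{0n}\le h_*$ comes from a qualitative subsequence/contradiction argument resting on the continuity of ${\rm ISB}$ and the pointwise convergence ${\rm MISE}_{n_k}(h)\to{\rm ISB}(h)$, citing Proposition 1 of Tenreiro (2006). Your replacement---the quantitative coercivity claim $\inf_{0<T\le D'}J_n\to+\infty$, obtained by cancelling the $T^{-1}$ blow-up against the finite constant $\pi\psi(F)$ on $(0,T_{\min}]$ and using $(n+1)a(D')\to\infty$ on $[T_{\min},D']$---is more self-contained and even quantifies the rate at which small bandwidths become suboptimal, at the cost of being specific to the sinc kernel, whereas the lemma's mechanism works for general integrable kernels. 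Two minor observations: the hypothesis $C_f=D_f$ is not what makes $a(D')>0$; continuity of $\varphi_f$ alone shows that $\int_{D'}^\infty t^{-2}|\varphi_f(t)|^2dt=0$ would force $\varphi_f\equiv0$ on $(D',\infty)$ and hence $D_f\le D'$, so your argument in fact needs only $D'<D_f$ there (the assumption $C_f=D_f$ is inherited from the general framework of Theorem~\ref{thm:limit} rather than being essential for the sinc kernel). Also, the appeal to Plancherel for $J_n(T)\to0$ is unnecessary, since $|\varphi_f|\le1$ already makes $t^{-2}|\varphi_f(t)|^2$ integrable at infinity.
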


If it were integrable, the sinc kernel could be considered as a superkernel with $S_{\rm sinc}=T_{\rm sinc}=1$, so from Theorem \ref{thm:hsinc} it follows that all the remarks above about the limit behavior of $h_{0n}$ and the optimal MISE for superkernel distribution function estimators can be equally applied to the sinc kernel distribution estimator.

\section{Numerical examples}\label{sec:examples}

In this section we present some examples to further illustrate the usefulness and consequences of
Theorems \ref{thm:Fourier}, \ref{thm:limit} and \ref{thm:hsinc} above.

\subsection{Example 1}\label{ex:dlVP}

In this example we consider the so-called Jackson-de la Vall\'e Poussin distribution $F$, with density function
$$f(x)=\dfrac{3}{4 \pi} \left(\dfrac{\sin(x/2)}{x/2}\right)^4=\frac{9+3\cos(2x)-12\cos(x)}{2\pi x^4}$$
and whose characteristic function is shown in \citet[][p. 516]{But71} to be
$$
\varphi_f(t)=
\begin{cases}
\begin{array}{ll}
1-3t^2/2+3|t|^3/4, & \abs{t}\leq 1\\
(2-|t|)^3/4, &  1\leq \abs{t} \leq 2\\
0, & \abs{t}\geq 2
\end{array},
\end{cases}
$$
which implies that $C_f=D_f=2$.

%

As shown in Theorems \ref{thm:limit} and \ref{thm:hsinc}, since $C_f=D_f<\infty$ this distribution
(or any of its rescalings $F_a(x)=F(x/a)$ with $a>0$) represents a case where superkernel
distribution function estimators are asymptotically more efficient than the empirical distribution
function. To illustrate this fact, we include here a numerical comparison using two different
superkernels: the sinc kernel and a proper superkernel, the trapezoidal superkernel given by
$k(x)=(\pi x^2)^{-1}\{\cos x-\cos(2x)\}$, for which $S_k=T_k=1$ \citep[see][]{CMN07}.

%
%
%
%
%
%
%
%
%
%

It is not hard from Theorem \ref{thm:Fourier} (for the trapezoidal kernel) and (\ref{eq:MISEsinc})
(for the sinc kernel) to come up with an explicit formula for the exact MISE function in each case.
These exact MISE calculations allow to numerically compute the optimal bandwidth sequences $h_{0n}$
and the minimum MISE values. The optimal bandwidth sequences for both superkernel estimators
are shown in Figure \ref{fig:2} (left) as a function of $\log_{10} n$, where it is already
noticeable that they both have limit $1/2$, as predicted from theory.

\begin{figure}[t]
  \centering
\includegraphics[width=0.48\textwidth]{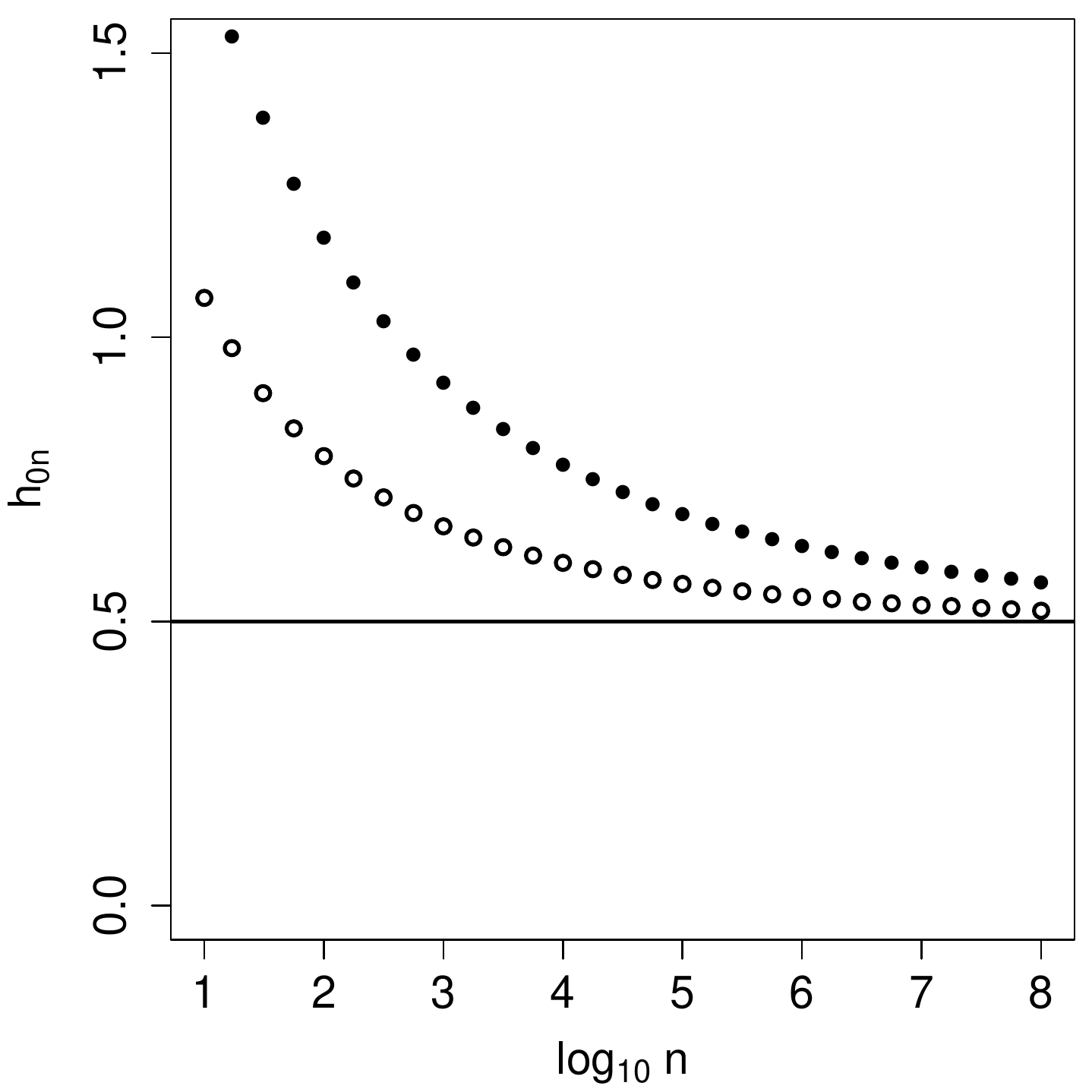}\hspace{0.04\textwidth}\includegraphics[width=0.48\textwidth]{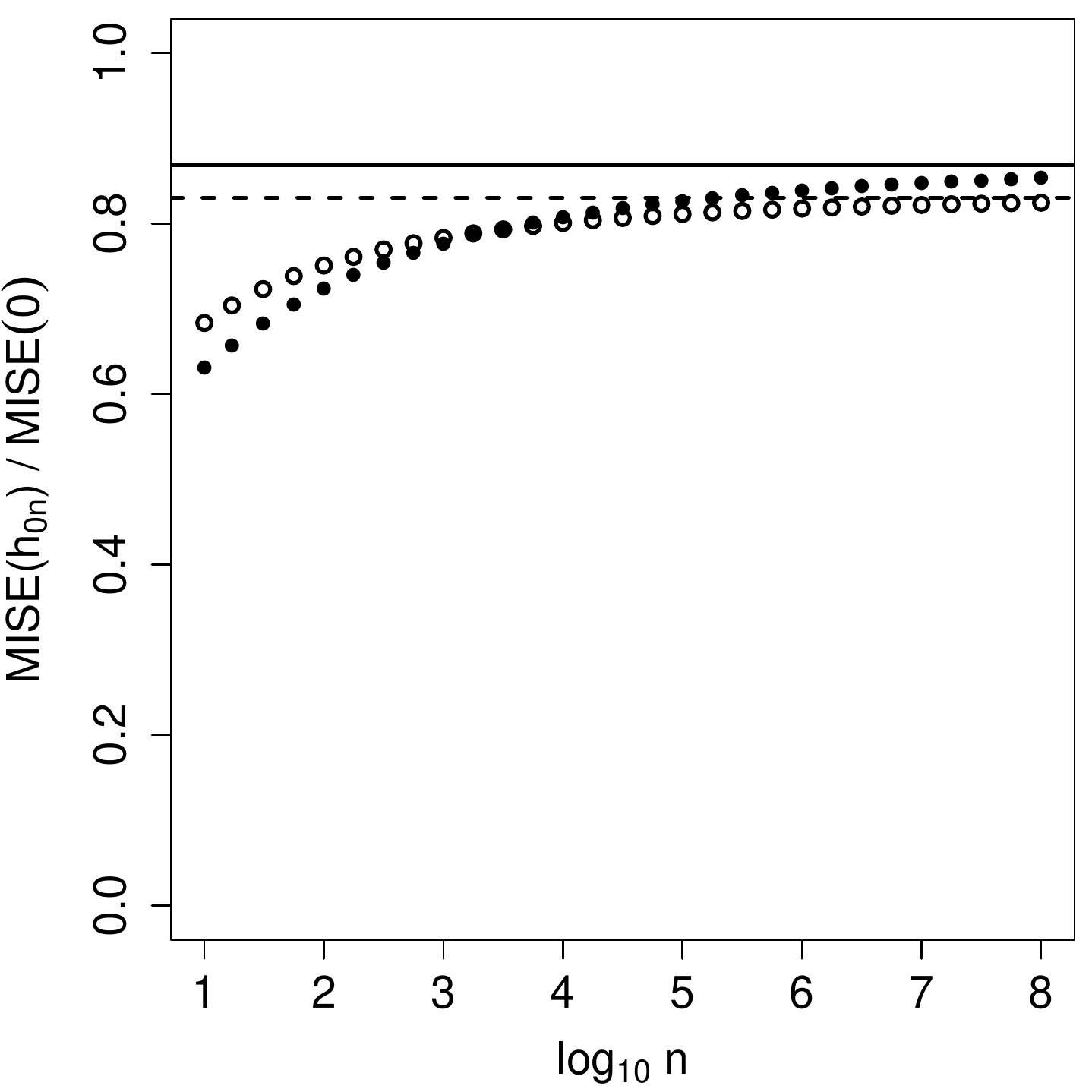}
  \caption{Optimal bandwidth sequence (left) and relative efficiency in MISE (right) for the estimation of the Jackson-de la Vall\'e Poussin distribution, as a function of $\log_{10}n$. The lines show the limit values. Solid circles and solid lines correspond to the trapezoidal superkernel and open circles and dashed lines correspond to the sinc kernel.}
  \label{fig:2}
\end{figure}

The right graph in Figure \ref{fig:2} shows the relative efficiency of both superkernel estimators
using optimal bandwidths with respect to the empirical estimator, namely ${\rm MISE}(h_{0n})/{\rm
MISE}(0)$, together with their asymptotic values, given by ${\rm MISE}(S_k/D_f)/{\rm
MISE}(0)=1-\psi(K)S_k/\{\psi(F)D_f\}$. Using (\ref{eq:F1F}) it follows that $\psi(F)=(96\log
2-43)/(8\pi)$, and $\psi(K)$ equals $(4\log 2-2)/\pi$ and $1/\pi$ for the trapezoidal and the sinc
kernel, respectively, resulting in asymptotic relative efficiencies of approximately 0.87 and 0.83
for the two superkernel estimators, as reflected on Figure \ref{fig:2}. For this distribution, the
trapezoidal kernel is more efficient than the sinc kernel up to about sample size $n=3000$, but
asymptotically the sinc kernel is slightly more efficient. Both are markedly more efficient than
the empirical distribution as was to be expected from asymptotic theory; besides, the gains are
even more substantial for low and moderate sample sizes.

\subsection{Example 2}\label{ex:normales}

In this second example we make use of the MISE expressions in terms of characteristic functions to
obtain exact MISE formulas for the case-study in which $F$ corresponds to the $N(0,\sigma^2)$
distribution and the integrated kernel is either the standard normal distribution function $\Phi$,
or the integrated sinc kernel, and we compare both estimators.

For this specific example the exact MISE formula for the density estimation problem was provided in
\cite{Fry76} making use the convolution properties of the normal density function, which are also
useful for deriving many other integral results for the normal density and its derivatives
\citep[see][]{AMPW95}.

However, convolution techniques seem to be of little use to find exact MISE expressions for kernel distribution function estimators in the normal case, where not even the estimation goal $F$ has an explicit formula. For this problem, it is convenient to work with exact expressions in terms of characteristic functions. For instance, using (\ref{eq:F1F}) it immediately follows that the MISE for the empirical distribution function equals $n^{-1}\pi^{-1/2}\sigma$ and, similarly, it is not hard to show that for the kernel estimator with the normal kernel
$$\pi^{1/2}{\rm MISE}(h)=n^{-1}\{(h^2+\sigma^2)^{1/2}-h\}+\{(2h^2+4\sigma^2)^{1/2}-(h^2+\sigma^2)^{1/2}-\sigma\}$$
and with the sinc kernel
$$\pi{\rm MISE}(h)=(1+n^{-1})\big\{he^{-\sigma^2/h^2}+2\sigma\sqrt{\pi}\Phi\big(\sigma\sqrt{2}\big/h\big)\big\}-n^{-1}h-(2+n^{-1})\sigma\sqrt{\pi}.$$
In Figure \ref{fig:3} we show the relative efficiency ${\rm MISE}(h_{0n})/{\rm MISE}(0)$ as a function of $\log_{10}n$ for $\sigma=1$ for both kernel estimators with respect to the empirical distribution function. Here, all the three estimators are asymptotically equally efficient, in the sense that the relative efficiency converges to 1 as $n\to\infty$. However, it is clear that this convergence is much slower for the sinc kernel estimator, which is more efficient that the normal kernel estimator for sample sizes as low as $n=50$.
\begin{figure}[t]
  \centering
\includegraphics[width=0.48\textwidth]{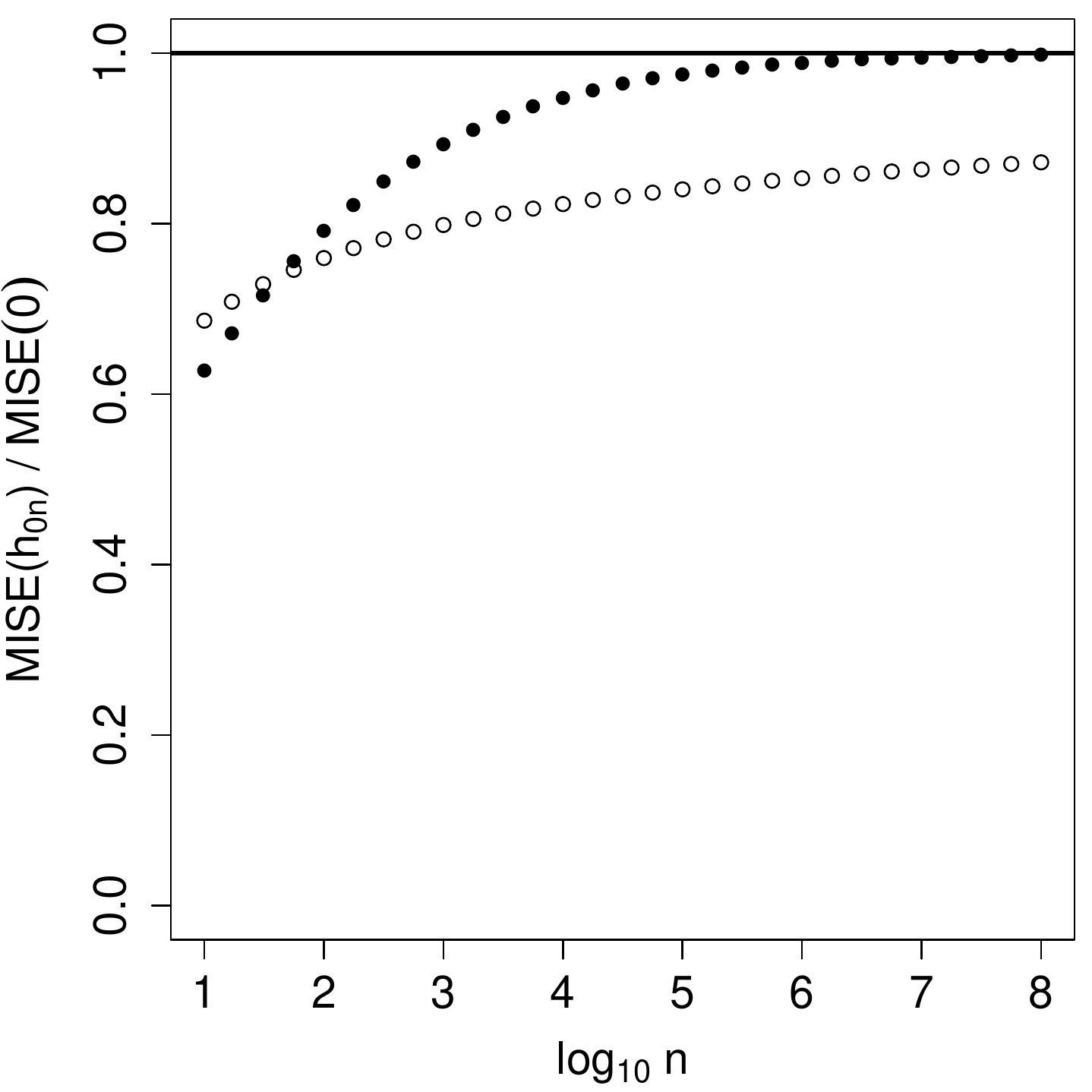}
  \caption{Relative efficiency in MISE for the estimation of standard normal distribution, as a function of $\log_{10}n$. The line shows the limit value. Solid circles correspond to the normal kernel and open circles correspond to the sinc kernel. }
  \label{fig:3}
\end{figure}

%
%
%
%

\section{Proofs}\label{sec:proofs}

For $h>0$, the statement of Theorem \ref{thm:Fourier} is contained within the proof of Proposition
2 in \cite{Abd93}. Therefore, it only remains to show the case $h=0$; i.e., Equation
(\ref{eq:F1F}). This formula is valid in the more general situation where $F$ is not necessarily a distribution
function, but an integrated kernel with finite first order moment, as shown in the following lemma.

\begin{lemma}\label{lem:intF1F}
Suppose that $K(x)=\int_{-\infty}^xk(y)dy$, where $k$ is an integrable function with $\int
k(y)dy=1$ and $\int|yk(y)|dy<\infty$. Then,
$$\int K(x)\{1-K(x)\}dx=(2\pi)^{-1}\int t^{-2}\{1-|\varphi_k(t)|^2\}dt.$$
\end{lemma}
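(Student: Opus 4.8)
The plan is to reduce the claimed identity to the Parseval--Plancherel theorem applied to a suitably defined $L^2$ function, since $K(1-K)$ is integrable while $K$ itself is not, and $K$ has no Fourier transform in the ordinary sense. The natural candidate is $G(x) = K(x) - \indi{x\ge 0}$: because $K$ is a distribution function, $G(x)\to 0$ as $x\to-\infty$ and as $x\to+\infty$, and $G$ is absolutely continuous with $G'(x) = k(x) - \delta$-like behaviour replaced by $G'(x)=k(x)$ away from $0$. I would first check $G\in L^1\cap L^2$: the integrability of $|y k(y)|$ forces $K(x)=O(1/|x|)$ as $x\to-\infty$ and $1-K(x)=O(1/|x|)$ as $x\to+\infty$ (integrate the tail bound $\int_{|y|>|x|}|k(y)|dy \le |x|^{-1}\int|y k(y)|dy$), so $G$ decays like $1/|x|$ at both ends, which gives $G\in L^2$; combined with local boundedness this also gives $G\in L^1$ after one notes $\int|x|\,|k(x)|dx<\infty$ controls $\int|G|$ as well. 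Hence $\varphi_G$ is a bona fide, continuous, square-integrable function.

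Next I would compute $\varphi_G$ explicitly. Writing $G(x)=-\int_x^\infty k(y)dy$ for $x\ge 0$ and $G(x)=\int_{-\infty}^x k(y)dy$ for $x<0$, an integration by parts (justified by the decay of $G$) gives $\varphi_G(t) = \int e^{itx}G(x)dx = (it)^{-1}\int e^{itx}k(x)dx - (it)^{-1} = (it)^{-1}\{\varphi_k(t)-1\}$ for $t\ne 0$. The boundary terms vanish precisely because $G$ tends to $0$ at $\pm\infty$; this is where the hypothesis $\int|yk(y)|dy<\infty$ is doing its real work. Then Plancherel's identity yields
\begin{equation*}
\int |G(x)|^2\,dx = (2\pi)^{-1}\int |\varphi_G(t)|^2\,dt = (2\pi)^{-1}\int t^{-2}|\varphi_k(t)-1|^2\,dt = (2\pi)^{-1}\int t^{-2}\{1-|\varphi_k(t)|^2\}\,dt,
\end{equation*}
the last step using that $|1-\varphi_k(t)|^2 = 1 - 2\Re\varphi_k(t) + |\varphi_k(t)|^2$ and that the cross term $-2\Re\varphi_k(t)+2$ integrates against $t^{-2}$ to give... wait, that is not immediate, so I will instead argue the last equality directly from the left-hand side: $|G(x)|^2 = (K(x)-\indi{x\ge0})^2$, which equals $K(x)^2$ for $x<0$ and $(1-K(x))^2$ for $x\ge0$, and one checks by adding and subtracting that $\int|G|^2 = \int K(1-K)$ — indeed $K(1-K) - G^2 = K(1-K) - K^2\indi{x<0} - (1-K)^2\indi{x\ge0}$, and on $\{x<0\}$ this is $K - 2K^2 + K^2\cdot(\text{stuff})$... so the cleaner route is to verify the pointwise identity $K(x)\{1-K(x)\} = G(x)^2 + \indi{x\ge0}\{1-K(x)\}\cdot 0 + \cdots$ — in fact the correct clean statement is $K(1-K) = G^2$ fails, but $\int K(1-K) = \int G^2$ holds after absorbing a telescoping term; I would sort this bookkeeping out carefully, as it is the one genuinely error-prone spot.

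The main obstacle, then, is not the Fourier analysis — which is a routine Plancherel application once the right $L^2$ representative is identified — but the elementary identity relating $\int K(1-K)$ to $\int G^2 = \int(K-\indi{\ge0})^2$. Concretely, $(K-\indi{\ge0})^2 = K^2 - 2K\indi{\ge0} + \indi{\ge0}$, so $K(1-K) - (K-\indi{\ge0})^2 = K - 2K^2 + 2K\indi{\ge0} - \indi{\ge0} = -( \indi{\ge0}-K)(1 - 2K) $... which is not identically zero, but I expect its integral over $\R$ to vanish by a symmetry/telescoping argument (splitting at $0$ and using $K(-x)\leftrightarrow 1-K(x)$-type manipulations are not available without symmetry of $k$, so instead I would integrate $-(\indi{\ge0}-K)(1-2K)$ directly: on $(-\infty,0)$ it is $-K(1-2K)=2K^2-K$ and on $(0,\infty)$ it is $(1-K)(1-2K)=1-3K+2K^2$, and both pieces are integrable with a total that I will show equals zero by an explicit antiderivative or by noting it equals $\frac{d}{dx}[\text{something vanishing at }\pm\infty]$). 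Once that cancellation is confirmed, the lemma follows immediately, and specializing $k=f$ recovers Equation~(\ref{eq:F1F}) and hence completes the $h=0$ case of Theorem~\ref{thm:Fourier}.
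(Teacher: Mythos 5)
Your Fourier-analytic machinery is fine (the decay estimates, the computation $\varphi_G(t)=(it)^{-1}\{\varphi_k(t)-1\}$, and the Plancherel step are all correct), but the bookkeeping identity you defer to ``sort out carefully'' is in fact false, and it is the crux of the lemma. With $G=K-I_{[0,\infty)}$ one has $\int G^2=\int_{-\infty}^0K^2+\int_0^\infty(1-K)^2$, and this does \emph{not} equal $\int K(1-K)$ in general: for $k$ the uniform density on $[0,1]$, $\int K(1-K)=\int_0^1 u(1-u)\,du=1/6$ while $\int G^2=\int_0^1(1-x)^2dx=1/3$; the ``correction term'' you compute integrates to $-1/6$, not $0$. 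A quick way to see the approach cannot be patched is that $\int(K-I_{[0,\infty)})^2$ is not invariant under translating $k$ (for the uniform density on $[-1/2,1/2]$ it equals $1/12$), whereas both sides of the lemma are. Equivalently, on the Fourier side Plancherel gives you $(2\pi)^{-1}\int t^{-2}|1-\varphi_k(t)|^2dt$, and $|1-\varphi_k|^2=1-2\Re\varphi_k+|\varphi_k|^2$ differs from $1-|\varphi_k|^2$ by $2\Re\{\varphi_k(\overline{\varphi_k}-1)\}$, whose integral against $t^{-2}$ does not vanish --- this is exactly the point where you hesitated, and the hesitation was warranted.

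The missing idea is to place the jump of the indicator at a \emph{random} point distributed according to $k$ rather than at $0$: one has the pointwise identity
\begin{equation*}
K(x)\{1-K(x)\}=\int\big\{I_{[y,\infty)}(x)-K(x)\big\}^2k(y)\,dy,
\end{equation*}
which follows by expanding the square and using $\int I_{[y,\infty)}(x)k(y)\,dy=K(x)$. Applying Parseval to each $G_y=I_{[y,\infty)}-K$ (your computation gives $\varphi_{G_y}(t)=(-it)^{-1}\{e^{ity}-\varphi_k(t)\}$) and then integrating in $y$ against $k(y)\,dy$ turns the cross term $2\Re\{e^{-ity}\varphi_k(t)\}$ into $2|\varphi_k(t)|^2$, so that $1+|\varphi_k|^2-2|\varphi_k|^2=1-|\varphi_k|^2$ emerges, which is precisely the paper's proof. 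Your single-point $G$ is the special case $y=0$, and the averaging over $y$ is not a cosmetic convenience but the step that produces $1-|\varphi_k|^2$ instead of $|1-\varphi_k|^2$.
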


\begin{proof}
It is not hard to show that $K(x)\{1-K(x)\}=\int\{I_{[y,\infty)}(x)-K(x)\}^2k(y)dy$, where $I_A$
stands for the indicator function of a set $A$. Moreover, reasoning as in the proof of Proposition
2 in \cite{Abd93}, it follows that the condition $\int|yk(y)|dy<\infty$ guarantees that
$\int|I_{[y,\infty)}(x)-K(x)|dx<\infty$ for all $y$, which implies that the function
$G_y(x)=I_{[y,\infty)}(x)-K(x)$ is square integrable, since $K$ is bounded (because
$|K(x)|\leq\int|k(y)|dy$ for all $x$). Therefore, by Parseval's identity,
$\int\{I_{[y,\infty)}(x)-K(x)\}^2dx=(2\pi)^{-1}\int|\varphi_{G_y}(t)|^2dt$. The Fourier transform
of $G_y$ is shown to be $(-it)^{-1}\{e^{ity}-\varphi_k(t)\}$, since splitting the integration
region and using integration by parts,
\begin{align*}
-it\varphi_{G_y}(t)&=it\int_{-\infty}^y e^{itx}K(x)dx-it\int_y^{\infty} e^{itx}\{1-K(x)\}dx\\
&=K(y)e^{ity}-\int_{-\infty}^y e^{itx}k(x)dx+\{1-K(y)\}e^{ity}-\int_y^{\infty} e^{itx}k(x)dx\\
&=e^{ity}-\varphi_k(t).
\end{align*}
Thus, $\int\{I_{[y,\infty)}(x)-K(x)\}^2dx=(2\pi)^{-1}\int
t^{-2}\big[1+|\varphi_k(t)|^2-2\Re\big\{e^{-ity}\varphi_k(t)\big\}\big]dt$, where $\Re\{z\}$
denotes the real part of a complex number $z$. This finally leads to
\begin{align*}
\int K(x)\{1-K(x)\}dx&=\iint \{I_{[y,\infty)}(x)-K(x)\}^2k(y)dxdy\\
&=(2\pi)^{-1}\iint t^{-2}\big[1+|\varphi_k(t)|^2-2\Re\big\{e^{-ity}\varphi_k(t)\big\}\big]k(y)dydt\\
&=(2\pi)^{-1}\int t^{-2}\{1-|\varphi_k(t)|^2\}dt,
\end{align*}
where the last line follows from the fact that $\int
e^{-ity}\varphi_k(t)k(y)dy=\varphi_k(-t)\varphi_k(t)=|\varphi_k(t)|^2$.
\end{proof}

The proof of Theorem \ref{thm:limit} is immediate from the following lemma.

\begin{lemma}\label{lemaSK}
Assume that $F$ and $K$ satisfy the assumptions of Theorem \ref{thm:limit}. Then,
$$S_k/D_f \leq \inf_{n \in \mathbb{N}} h_{0n}\leq\limsup_{n \to \infty} h_{0n} \leq h_* \leq \min \lbrace S_k/C_f, T_k/D_f \rbrace.$$
\end{lemma}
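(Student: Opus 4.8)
The plan is to establish the four-inequality chain for $S_k/D_f \leq \inf_n h_{0n} \leq \limsup_n h_{0n} \leq h_* \leq \min\{S_k/C_f, T_k/D_f\}$ by exploiting the exact Fourier expressions for ${\rm IV}$ and ${\rm ISB}$ from Theorem \ref{thm:Fourier}. The central computation is to analyze ${\rm ISB}(h)$: it equals $(2\pi)^{-1}\int t^{-2}|1-\varphi_k(th)|^2|\varphi_f(t)|^2\,dt$, which vanishes precisely when the integrand is zero a.e., i.e.\ when $\varphi_k(th)=1$ for a.e.\ $t$ in the support of $\varphi_f$. Since $\varphi_f$ is nonzero a.e.\ on $[0,D_f]$ (using $C_f=D_f$) and zero beyond $D_f$, and $\varphi_k\equiv1$ a.e.\ on $[0,T_k]=[0,S_k]$ (using $S_k=T_k$) but not beyond, one checks that ${\rm ISB}(h)=0$ iff $th\in[0,S_k]$ for a.e.\ $t\in[0,D_f]$, i.e.\ iff $hD_f\leq S_k$. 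This pins down $h_*=S_k/D_f$ directly, which simultaneously gives the outer equalities $S_k/C_f=S_k/D_f=T_k/D_f$, so the rightmost inequality is actually an equality under the hypotheses; I would state it that way but prove the more general bounds of Lemma \ref{lemaSK} where $C_f\leq D_f$ and $S_k\leq T_k$ need not coincide.

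For the rightmost inequality $h_*\leq\min\{S_k/C_f,T_k/D_f\}$ in the general case: if $h>S_k/C_f$ then $th>S_k$ on a set of positive measure inside $[0,C_f]$ where $\varphi_f\neq0$ a.e.; one must argue that $\varphi_k(th)\neq1$ on a further positive-measure subset there — this uses the definition of $S_k$ as the infimum of $\{t:\varphi_k(t)\neq1\}$, so $\varphi_k$ differs from $1$ on a positive-measure subset of every interval $(S_k,S_k+\varepsilon)$ — forcing ${\rm ISB}(h)>0$. Similarly, if $h>T_k/D_f$ then for $t$ slightly below $D_f$ we have $th>T_k$, and by definition of $T_k$, $\varphi_k(th)\neq1$ a.e.\ there, while $\varphi_f(t)\neq0$ on a positive-measure set (since $t<D_f$), again giving ${\rm ISB}(h)>0$. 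Combining, $h_*\leq\min\{S_k/C_f,T_k/D_f\}$.

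For the two middle inequalities I would use the decomposition of the MISE established in Remark \ref{rem3}: on $[0,h_*]=[0,S_k/D_f]$ we have ${\rm ISB}(h)=0$, so ${\rm MISE}(h)={\rm IV}(h)$, and plugging $\varphi_k(th)=1$ a.e.\ on the support of $\varphi_f$ into the ${\rm IV}$ formula gives ${\rm IV}(h)=(2\pi)^{-1}n^{-1}\int t^{-2}\{1-|\varphi_f(t)|^2\}\,dt - $ (the missing piece), which by \eqref{eq:F1F} applied to $K$ (Lemma \ref{lem:intF1F}) and $F$ reduces to $n^{-1}\psi(F)-n^{-1}\psi(K)h$, strictly decreasing in $h$ because $\psi(K)>0$. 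Hence ${\rm MISE}$ is strictly decreasing on $[0,S_k/D_f]$, so the minimizer $h_{0n}$ cannot lie in $[0,S_k/D_f)$: this yields $h_{0n}\geq S_k/D_f$ for every $n$, giving the leftmost inequality. For $\limsup_n h_{0n}\leq h_*$: for any fixed $h>h_*$ we have ${\rm ISB}(h)>0$ (a positive constant independent of $n$), while ${\rm MISE}(S_k/D_f)=n^{-1}(\psi(F)-\psi(K)S_k/D_f)\to0$; thus for $n$ large ${\rm MISE}(h)\geq{\rm ISB}(h) > {\rm MISE}(S_k/D_f)$, so $h_{0n}<h$ eventually. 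Since $h>h_*$ was arbitrary, $\limsup_n h_{0n}\leq h_*$.

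The main obstacle is the careful handling of the ``a.e.'' qualifiers in the definitions of $C_f,D_f,S_k,T_k$ when translating ``integrand vanishes a.e.'' into statements about $h$. In particular, showing that $h>S_k/C_f$ forces ${\rm ISB}(h)>0$ requires knowing that the set $\{t\in[0,C_f]:th>S_k\text{ and }\varphi_k(th)\neq1\text{ and }\varphi_f(t)\neq0\}$ has positive measure; this is a matter of intersecting three sets each of which is ``positive-measure in every neighborhood'' of the relevant boundary, and one has to be slightly attentive to the scaling $t\mapsto th$ (which preserves null sets) and to the fact that $C_f$ could equal $D_f$ or be strictly smaller. Everything else is a routine substitution into the Theorem \ref{thm:Fourier} formulas and the observation, already recorded in Remark \ref{rem3}, that the MISE is affine and strictly decreasing on the unbiasedness interval.
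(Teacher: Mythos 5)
Your treatment of the first inequality (ISB vanishes on $[0,S_k/D_f]$, MISE is affine there with slope $-n^{-1}\psi(K)<0$, hence $h_{0n}\ge S_k/D_f$) coincides with the paper's argument, and your sketch of the last inequality fills in what the paper simply delegates to Chac\'on et al.\ (2007); it is correct provided you invoke the continuity of $\varphi_k$ (which holds because $k\in L^1$) to upgrade ``$\varphi_k\neq1$ at some point of every interval $(S_k,S_k+\varepsilon)$'' to ``on a positive-measure subset'' --- the definition of $S_k$ as an infimum alone does not give positive measure.

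The genuine gap is in the step $\limsup_n h_{0n}\le h_*$. From ``${\rm MISE}_n(h)\ge{\rm ISB}(h)>{\rm MISE}_n(S_k/D_f)$ for $n$ large'' you may only conclude that the single point $h$ is eventually not the minimizer, i.e.\ $h_{0n}\neq h$; you cannot conclude $h_{0n}<h$ eventually, because the threshold on $n$ depends on the point being excluded and the minimizer could sit at some $h'>h$ for which your comparison has not yet kicked in. To repair the argument along your lines you would need $\inf_{h'\ge h}{\rm ISB}(h')>0$, which does not follow from ``${\rm ISB}>0$ pointwise on $(h_*,\infty)$'' alone: it additionally requires controlling ISB at infinity (e.g.\ by Riemann--Lebesgue and Fatou, $\liminf_{h'\to\infty}{\rm ISB}(h')\ge(2\pi)^{-1}\int t^{-2}|\varphi_f(t)|^2dt=\infty$, after which continuity of ISB and compactness of $[h,M]$ give the positive infimum). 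The paper avoids this issue with a contradiction argument: if $h_L=\limsup_n h_{0n}>h_*$, continuity of ISB gives ${\rm ISB}(h_{0n_k})\to{\rm ISB}(h_L)>0$ along a subsequence, while ${\rm MISE}_{n_k}(h)\to{\rm ISB}(h)$ for every fixed $h$, so the chain ${\rm ISB}(h)=\lim_k{\rm MISE}_{n_k}(h)\ge\lim_k{\rm MISE}_{n_k}(h_{0n_k})\ge{\rm ISB}(h_L)>0$ holds for all $h$ and contradicts ${\rm ISB}(h)\to0$ as $h\to0$. One of these two fixes is needed; as written, your deduction does not go through.
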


\begin{proof}
First notice that ${\rm ISB}(h)=0$ for all $h \in [0,S_k/D_f]$, since using Theorem
\ref{thm:Fourier}
\begin{align*}
0&\leq  \pi\,{\rm ISB}(h)=\int_0^{\infty} t^{-2}|1-\varphi_k(th)|^2|\varphi_f(t)|^2dt\\
 &\leq \int_0^{S_k/h} t^{-2}|1-\varphi_k(th)|^2|\varphi_f(t)|^2dt + \int_{D_f}^{\infty} t^{-2}|1-\varphi_k(th)|^2|\varphi_f(t)|^2dt=0,
\end{align*}
with the last equality due to the facts that $\varphi_k(th)=1$ for $t \in [0,S_k/h]$ and
$\varphi_f(t)=0$ for $t \geq D_f$ by definition of $S_k$ and $D_f$, respectively.

Therefore, for $h\in[0,S_k/D_f]$ the MISE reduces to the IV, and admits the exact expression ${\rm
MISE}(h)=n^{-1}\{\psi(F)-\psi(K)h\}$ because, again using Theorem \ref{thm:Fourier}, noting the
expression (\ref{eq:F1F}) for $\psi(F)$, taking into account the definition of $S_k$ and $D_f$ and
making the change of variable $s=th$, we obtain
\begin{align*}
{\rm IV}(h)&=(n\pi)^{-1}\int_0^\infty t^{-2}|\varphi_k(th)|^2\{1-|\varphi_f(t)|^2\}dt\\
&=(n\pi)^{-1}\int_0^\infty t^{-2}\{1-|\varphi_f(t)|^2\}dt-(n\pi)^{-1}\int_0^\infty t^{-2}\{1-|\varphi_k(th)|^2\}\{1-|\varphi_f(t)|^2\}dt\\
&=n^{-1}\psi(F)-(n\pi)^{-1}\int_{S_k/h}^\infty t^{-2}\{1-|\varphi_k(th)|^2\}dt=n^{-1}\psi(F)-n^{-1}\psi(K)h.
\end{align*}
Since the MISE function is linear in $h$ with negative slope in $[0,S_k/D_f]$, its minimum has to
be attached at some point greater than $S_k/D_f$, hence we obtain the first inequality.

On the other hand, reasoning as in \cite{CMNP07} it is possible to show that ${\rm ISB}(h)>0$ for
$h>S_k/C_f$ and for $h>T_k/D_f$, thus yielding the last inequality. Finally, denote $h_L=\limsup_{n
\rightarrow \infty}h_{0n}$ and assume that $h_L>h_*$. Then, the continuity of ${\rm ISB}(h)$ with
respect to $h$ \citep[][Proposition 1]{Ten06} entails that there is a subsequence $h_{0n_k}$ such
that, as $k\to\infty$, ${\rm ISB}(h_{0n_k})\to{\rm ISB}(h_L)$ with ${\rm ISB}(h_L)>0$ since we
are assuming $h_L>h_*$. But from (\ref{IV}) and (\ref{ISB}) it immediately follows that, for every
fixed $h$, ${\rm MISE}_{n_k}(h)\to{\rm ISB}(h)$ as $k\to\infty$, so we obtain that the following chain
of inequalities
$${\rm ISB}(h)=\lim_{k\to\infty}{\rm MISE}_{n_k}(h)\geq\lim_{k\to\infty}{\rm MISE}_{n_k}(h_{0n_k})\geq\lim_{k\to\infty}{\rm ISB}(h_{0n_k})={\rm ISB}(h_L)>0$$
is valid for every fixed $h$, implying that $\lim_{h\to0}{\rm ISB}(h)\geq{\rm ISB}(h_L)>0$, which contradicts Proposition 1 in \cite{Ten06}, where it is shown that ${\rm ISB}(h)\to0$ as $h\to0$. Hence, it should be $h_L\leq h_*$, as desired.
\end{proof}

Finally, we show the proof of Theorem \ref{thm:hsinc}. We focus only on the statement about the existence of the optimal bandwidth sequence, since the arguments showing the limit behavior can be adapted from the proof of Lemma \ref{lemaSK} above.

\begin{proof}[Proof of Theorem \ref{thm:hsinc}]
It is clear from (\ref{eq:MISEsinc}) and (\ref{eq:F1F}) that $\lim_{h\to0}{\rm MISE}(h)=n^{-1}\psi(F)$. Moreover, $\lim_{h\to\infty}{\rm MISE}(h)=\infty$, since $\varphi_f(0)=1$ and by continuity it is possible to take $\delta>0$ such that $|\varphi_f(t)|^2>\frac12$ for all $0\leq t\leq\delta$, so this yields $\int_0^\infty t^{-2}|\varphi_f(t)|^2dt\geq\frac12\int_0^\delta t^{-2}dt=\infty$. These two limit conditions, together with the fact that ${\rm MISE}(h)$ is a continuous function, imply that the existence of a minimizer of the MISE is guaranteed if there is some $h_1>0$ such that ${\rm MISE}(h_1)<n^{-1}\psi(F)$. But from (\ref{eq:MISEsinc}) we have
$${\rm MISE}(h)-n^{-1}\psi(F)=-(n\pi)^{-1}h+(1+n^{-1})\pi^{-1}\int_{1/h}^\infty t^{-2}|\varphi_f(t)|^2dt$$
so that using the Riemann-Lebesgue lemma and the dominated convergence theorem, it follows that
$$\lim_{h\to0}h^{-1}\{{\rm MISE}(h)-n^{-1}\psi(F)\}=-(n\pi)^{-1}<0,$$ which entails that there is
some $h_1>0$ fulfilling the aforementioned desired property.
\end{proof}

\bigskip

\noindent{\bf Acknowledgments.}  This work has been supported by grant MTM2010-16660 from the
Spanish Ministerio de Ciencia e Innovaci\'on and by the Centro de Matem\'atica da Universidade de
Coimbra (funded by the European Regional Development Fund through the program COMPETE and by the
Portuguese Government through the FCT - Funda\c{c}\~ao para a Ci\^encia e Tecnologia under the
project PEst-C/MAT/UI0324/2011).

\bibliographystyle{apalike}

\end{document}